\newtheorem{theorem}{Theorem}
\numberwithin{equation}{section}
\newtheorem{proposition}{Proposition}
\begin{document}
\title{Frustration in signed graphs}
\author{Vaidy Sivaraman}
\address{Department of Mathematical Sciences, 
Library North 2200, 
Binghamton University,
Binghamton, New York 13902-6000.}
\email{vaidy@math.binghamton.edu}

\keywords{Signed graph, Frustration Index, Frustration number, Cubic graph}
\date{March 27, 2014 \\  \text{      }  \text{  } \small {2010 Mathematics Subject Classification: 05C22}}   
\maketitle

\begin{abstract}
Zaslavsky conjectured the following: The minimum number of vertices to be deleted to restore balance in a subcubic signed graph is the same as the minimum number of edges to be deleted to restore balance. We prove this conjecture. Also, we obtain a bound for these invariants in a special class of signed graphs.  
\end{abstract}

In this note, we prove two results. The first result is a proof of a conjecture of Zaslavsky, and the second one is an upper bound  for the frustration index of a signed graph whose underlying graph is cubic and has girth at least $4$. \\
 
A signed graph $\Sigma$ is a pair $(G, \sigma)$, where $G$ is a graph and $\sigma : E(G) \to \{+1,-1\}$ is a function (called sign function). A circle in $\Sigma$ (connected nonempty $2$-regular subgraph) is said to be positive if the product of signs on its edges is $+1$, and negative otherwise. A signed graph is said to be balanced if all its circles are positive. Harary \cite{FH} introduced the notion of balance in a signed graph, and gave a characterization of balanced signed graphs. An unbalanced signed graph may become balanced when some of its vertices or edges are deleted. This is captured by two parameters: frustration number and frustration index.  Let $\Sigma$ be a signed graph. The {\it frustration index} of $\Sigma$, denoted $l(\Sigma)$, is the smallest number of edges whose deletion from $\Sigma$ leaves a balanced signed graph. The {\it frustration number} of $\Sigma$, denoted $l_0(\Sigma)$, is the smallest number of vertices whose deletion from $\Sigma$ leaves a balanced signed graph. We prove the conjecture of Zaslavsky \cite[Conjecture 7.1]{TZ} that these numbers are the same for any signed subcubic graph. 

Let $v$ be a vertex of $\Sigma$. The signed graph obtained from $\Sigma$ by negating all links incident to $v$ is said to be obtained from $\Sigma$ by switching at $v$. Two signed graphs are said to be switching equivalent if one can be obtained from the other by a sequence of switchings. 
 A signed graph is subcubic if every vertex in its underlying graph has degree at most $3$. Zaslavsky \cite{TZ} conjectured the following, which we prove.

\begin{theorem}\label{ZASLAVSKYCONJECTURE}
For every  signed subcubic graph $\Sigma$, $l_0(\Sigma) = l(\Sigma)$.  
\end{theorem}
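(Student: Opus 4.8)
The plan is to prove the two inequalities $l_0(\Sigma)\le l(\Sigma)$ and $l(\Sigma)\le l_0(\Sigma)$ separately, the first being routine and the second carrying all the content. For the first, I would take a minimum set $F$ of edges with $\Sigma - F$ balanced and choose one endpoint from each edge of $F$; deleting this set $S$ of at most $|F|$ vertices destroys every edge of $F$, so $\Sigma - S$ is a subgraph of the balanced graph $\Sigma - F$ and is therefore itself balanced, giving $l_0(\Sigma)\le|F|=l(\Sigma)$. Throughout I would use that switching preserves all circle signs, hence preserves balance and both invariants, together with the fact that $l(\Sigma)$ equals the minimum number of negative edges over all signed graphs switching-equivalent to $\Sigma$ (delete the negative edges to reach an all-positive, hence balanced, graph).

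For the reverse inequality, fix a minimum vertex set $S$ with $\Sigma - S$ balanced, write $T = V(\Sigma)\setminus S$, and switch so that $\Sigma - S = \Sigma[T]$ is all-positive; every negative edge now meets $S$. The first key step is to extract structure from the minimality of $S$: for each $v\in S$ the graph $\Sigma - (S\setminus\{v\})$ is unbalanced, so it contains a negative circle, which must pass through $v$ and otherwise lie in $T$. Such a circle uses two edges at $v$ going into $T$, so every $v\in S$ has at least two neighbours in $T$; since $\Sigma$ is subcubic this leaves $v$ with at most one neighbour in $S$. Hence $\Sigma[S]$ has maximum degree at most $1$, that is, it is a matching together with isolated vertices.

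The second key step exploits that $\Sigma[T]$ is all-positive: any path inside $T$ contributes sign $+1$, so the sign of any circle equals the product of the signs of its edges incident to $S$. I would then reduce the number of negative edges by switching only at vertices of $S$, treating each component of $\Sigma[S]$ independently; this is legitimate because such switches never flip a $T$-internal edge and distinct components share no incident edge. An isolated vertex $v\in S$ has all of its (two or three) neighbours in $T$, and switching at $v$ flips all incident edges, so one can always reach at most $\min(k,\deg v - k)\le 1$ negative edges at $v$. A matched pair $u\sim v$ consists of two degree-$3$ vertices, each with exactly two $T$-neighbours, giving five incident edges controlled by the two independent switches at $u$ and $v$; a short case analysis on the four switch choices shows the minimum number of negative edges among these five is at most $2$. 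Summing over components, the resulting switching has at most $|S|$ negative edges, and deleting them balances the graph, so $l(\Sigma)\le|S|=l_0(\Sigma)$.

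I expect the matched-pair case to be the main obstacle: it is the only place where deleting a single edge per vertex is insufficient, and the bound of $2$ negative edges for the five-edge gadget must be checked over all switch configurations. The clean way is to observe that the four configurations split into two complementary pairs whose totals are $2r+4$ and $6-2r$, where $r\in\{0,1\}$ records the sign of the edge $uv$, forcing some configuration to attain at most $2$. The remaining care is bookkeeping: verifying that the per-component switches do not interfere and that $\Sigma[T]$ stays all-positive throughout, so that the final negative edges can simply be deleted.
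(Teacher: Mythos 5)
Your argument is correct for \emph{loopless} subcubic signed graphs, but as written it does not cover loops, which the theorem allows and which the paper dispatches in its opening lines. The step that fails is your minimality argument: for $v\in S$ you conclude that a negative circle of $\Sigma - (S\setminus\{v\})$ through $v$ ``uses two edges at $v$ going into $T$,'' but that circle may be a negative loop at $v$, in which case $v$ need not have two edges into $T$, $\Sigma[S]$ need not have maximum degree at most $1$ (a loop contributes $2$ to the degree), and your per-component analysis no longer applies. Moreover, switching negates only links, never loops, so no sequence of switches can make a negative loop positive; the loop case genuinely needs separate treatment rather than being absorbed into your switching framework. The clean fix is the paper's preliminary reduction: a subcubic vertex carries at most one loop, and a negative loop increases both $l$ and $l_0$ by exactly one (the loop must be deleted in any edge-deletion set, the vertex carrying it in any vertex-deletion set, and once either is removed the remaining vertex or pendant structure is harmless), so one may assume $\Sigma$ is loopless. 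With that sentence added, your proof is complete.

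For the loopless case your route is genuinely different from the paper's, and the comparison is instructive. The paper never uses minimality of the deleted vertex set $X$: after switching so that the induced subgraph on $Y = V(\Sigma) - X$ is all positive, it simply iterates ``if some $v\in X$ meets at least two negative edges, switch at $v$''; since $\deg v \le 3$, each such switch strictly decreases the total number of negative edges, so the process terminates with every vertex of $X$ meeting at most one negative edge, and these negative edges (each meeting $X$) number at most $|X|$. You instead invest in a structure lemma --- minimality of $S$ forces $\Sigma[S]$ to be a matching plus isolated vertices --- and then perform a one-shot optimization over switch choices on each component, the five-edge gadget with its complementary-pair totals $2r+4$ and $6-2r$ being the crux (that computation is correct, as is the observation that distinct components can be optimized independently). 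Your argument is sound but heavier, and its reliance on minimality is a real limitation: the paper's greedy argument needs no structure on $\Sigma[X]$ at all, which is exactly why it generalizes beyond the subcubic case to the degree-sequence bound $l(\Sigma)\le\sum_{i=1}^{l_0(\Sigma)}\left\lfloor d_i/2\right\rfloor$ in the Proposition that follows, whereas the matching structure of $\Sigma[S]$ is special to maximum degree $3$.
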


\begin{proof}
Let $\Sigma$ be a signed subcubic graph. Since $\Sigma$ is subcubic, it can have at most one  negative loop incident to a vertex, and hence the presence of a negative loop increases both the frustration index and number by one (frustration number increases by one because if a negative loop is present, the vertex incident to it must be deleted, whereas if the negative loop is deleted, the deletion of the vertex (now isolated or pendant) does not affect the balance of the signed graph). Hence we may assume that $\Sigma$ is loopless. It is trivial to see that $l_0(\Sigma) \leq l(\Sigma)$. To see the other direction, suppose $l_0(\Sigma) = k$. Let $v_1, \ldots , v_k \in V(\Sigma)$ such that $\Sigma \backslash \{v_1, \ldots , v_k \}$ is balanced. Let $X = \{v_1, \ldots , v_k\}$ and $Y = V(\Sigma) - X$. Zaslavsky's lemma \cite{TZ1} says that two signed graphs with the same underlying graph have the same list of positive circles if and only if they are switching equivalent. Hence we can switch so that $\Sigma : Y$ consists of only positive edges.  If in the resulting signed graph, $v_i$ is incident to more than one negative edge for some $i$, $1 \leq i \leq k$, switch at $v_i$ to reduce the number 
of negative edges (since $\Sigma$ is subcubic). We keep doing this until we have a signed graph $\Sigma'$, switching equivalent to $\Sigma$, with at most one negative edge incident to each $v_i$, and since switching at a vertex in $X$ does not change the sign of an edge with both endpoints in $Y$, $\Sigma' :  Y$ consists of only positive edges. Let $S$ be the set of negative edges in $\Sigma'$. Since every edge in $S$ has an endpoint in $X$, and no two of them share an endpoint in $X$, $|S| \leq |X| = k$. Now, $\Sigma' \backslash S$ consists only of positive edges, and is therefore balanced. This implies $l(\Sigma) = l(\Sigma') \leq |S| \leq k = l_0(\Sigma)$. 
\end{proof}

Note that for general signed graphs $l$ can be arbitrarily large compared to $l_0$. For example, the graph obtained from $n$ triangles by identifying a vertex in each triangle, with one edge in each triangle negative has $l_0 = 1$ and $l = n$. But having a bound on the degree or knowing the degree sequence of the underlying graph can be useful. As Zaslavsky (private communication) points out, the above proof can be used to prove the following:

\begin{proposition}
Let $G$ be a graph with degree sequence $(d_1, \ldots ,d_n)$, where $d_1 \geq \ldots \geq d_n$. Let $\Sigma$ be a signed graph with underlying graph $G$. Then 
\begin{equation*}
l(\Sigma) \leq \displaystyle \sum_{i=1}^{l_0(\Sigma)} \left \lfloor \frac{d_i}{2} \right \rfloor.
\end{equation*}
\end{proposition}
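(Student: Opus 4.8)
The plan is to follow the proof of Theorem~\ref{ZASLAVSKYCONJECTURE} almost verbatim, continuing the switching process past the point where it halts in the subcubic case. There one switches at each deleted vertex until it carries at most one negative edge, which suffices because $\lfloor 3/2 \rfloor = 1$; for arbitrary degrees the correct stopping rule is that each deleted vertex $v$ carries at most $\lfloor \deg(v)/2 \rfloor$ negative edges.

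Set $k = l_0(\Sigma)$ and choose $X = \{v_1, \dots, v_k\}$ with $\Sigma \setminus X$ balanced, and put $Y = V(\Sigma) - X$. As in the theorem we may assume $\Sigma$ is loopless (a negative loop forces its vertex into $X$ and, counting a loop as contributing $2$ to the degree, does not violate the bound). By Zaslavsky's switching lemma I would first switch so that $\Sigma : Y$ consists only of positive edges, which is possible because $\Sigma : Y$ is balanced. Then I would run the following process: while some $v_i \in X$ is incident to strictly more negative than positive edges, switch at $v_i$. Switching at $v_i$ changes only the signs of edges incident to $v_i$ and exchanges its negative- and positive-edge counts, so each such switch strictly decreases the total number of negative edges of $\Sigma$; hence the process terminates. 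Every switch is performed at a vertex of $X$, so $\Sigma : Y$ remains all-positive throughout.

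At termination each $v_i \in X$ has at most as many negative as positive incident edges, hence at most $\lfloor \deg(v_i)/2 \rfloor$ negative edges. Let $S$ be the set of negative edges of the resulting signed graph $\Sigma'$. Every edge of $S$ meets $X$, since $\Sigma' : Y$ is positive, so summing over $X$ counts each edge of $S$ at least once: $|S| \le \sum_{i=1}^{k} \lfloor \deg(v_i)/2 \rfloor$. As $\Sigma' \setminus S$ is all-positive and therefore balanced, $l(\Sigma) = l(\Sigma') \le |S|$. Finally, since $t \mapsto \lfloor t/2 \rfloor$ is monotone, the sum of $\lfloor \deg(v_i)/2 \rfloor$ over the $k$ vertices of $X$ is at most $\sum_{i=1}^{k} \lfloor d_i/2 \rfloor$, the sum over the $k$ largest degrees, which gives the claim.

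The main point to handle carefully is that switching at one vertex of $X$ also flips edges it shares with other vertices of $X$, so the per-vertex negative counts are coupled and cannot obviously be minimized all at once. The monovariant --- the total number of negative edges strictly drops at each switch --- resolves this, simultaneously forcing termination and ensuring the final signed graph meets the threshold at every $v_i$ at once. The only other wrinkle, the double counting of negative edges with both endpoints in $X$, is harmless since it merely enlarges the right-hand side.
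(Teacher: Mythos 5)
Your proposal is correct and is essentially the paper's own argument: the paper gives no separate proof, stating only that the proof of Theorem~\ref{ZASLAVSKYCONJECTURE} adapts, and your adaptation (switch at a vertex of $X$ whenever its negative edges outnumber its positive ones, using the total number of negative edges as the strictly decreasing monovariant, then bound $|S|$ by $\sum_{i=1}^{k}\lfloor d_i/2\rfloor$ via monotonicity of sorted degrees) is exactly the intended generalization of that proof. Your explicit handling of the coupling between switches at adjacent vertices of $X$ is the right point to flag, and the monovariant resolves it just as in the paper's subcubic case.
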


What can we say about the frustration number of a signed subcubic loopless graph? Suppose that $\Sigma$ is a signed subcubic loopless graph. Then we can switch $\Sigma$ to a signed graph $\Sigma'$ such that the negative edges in $\Sigma'$ form a matching (cf. Corollary 7.4 \cite{TZ}). Hence we have $l_0(\Sigma) \leq \frac{|V(\Sigma)|}{2}$. Can this bound be improved? We give two families of signed cubic graphs to show that without any additional restrictions on the underlying graph, the bound cannot be improved. First family: Disjoint union of $k$ copies of $K_4$, with every edge negative. Second family: Disjoint union of $k$ copies of the signed graph consisting of two vertices and  three links, exactly one of them negative. \\

The following result tells us that the frustration index of a signed graph whose underlying graph has $n$ vertices and girth at least $4$, and is cubic is at most $\frac{3n}{8}$. 

\begin{theorem}\label{FRUSTRATIONINDEXBOUNDFORCUBICGRAPHS}
Let $\Sigma$ be a signed graph whose underlying graph is cubic and has girth at least $4$. Then $l(\Sigma) \leq \frac{3}{8} |V(\Sigma)|$. 
\end{theorem}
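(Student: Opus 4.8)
The plan is to use the switching description of the frustration index: by Zaslavsky's switching lemma, $l(\Sigma)$ equals the minimum, over all switchings $s$ of $\Sigma$, of the number of edges that remain negative. First I would pass to a switching attaining this minimum. At such a minimizer no vertex can be incident with two or more negative edges, since switching at it would turn $\ge 2$ negatives into $\le 1$ and strictly decrease the total (this is exactly where the cubic hypothesis enters). Hence the negative edges form a matching $N$ with $|N| = l(\Sigma)$. If $|N| \le \tfrac{3}{8}|V(\Sigma)|$ we are done, so I assume $|N|$ is large; in the extreme case $|N| = \tfrac12 |V(\Sigma)|$ every vertex is matched and the positive edges form a $2$-factor $C$, each of whose cycles has length at least $4$ because $G$ has girth at least $4$.

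Next I would record the optimality of this switching as an inequality. For every $U \subseteq V(\Sigma)$, switching at $U$ negates precisely the edges of $\mathrm{cut}(U)$ (the edges with exactly one endpoint in $U$), so the number of negatives changes by $|(E\setminus N)\cap \mathrm{cut}(U)| - |N\cap\mathrm{cut}(U)|$, whence by optimality
\begin{equation*}
|N\cap\mathrm{cut}(U)| \;\le\; |(E\setminus N)\cap\mathrm{cut}(U)| \qquad \text{for all } U\subseteq V(\Sigma).
\end{equation*}
Taking $U$ to be a union of entire cycles of $C$ cuts no positive edge, so the right-hand side is $0$ and every negative edge must be a chord lying inside a single cycle of $C$. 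This is the first structural payoff of girth: a negative chord cannot join two cycle-vertices at distance one or two, so the chords inside each cycle are genuinely long.

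The heart of the argument is then local to a single cycle $C_j$ of length $\ell_j\ge 4$ carrying a matching of long negative chords. Here I would switch a contiguous arc of $C_j$: this turns exactly two positive cycle-edges negative but turns every chord straddling the arc positive, a net change of (chords straddled) minus $2$. Choosing the arcs well — equivalently, $2$-colouring each cycle with few colour-changes so as to maximize (chords made positive) minus (cycle-edges made negative) — I would aim to show a net gain of at least $\tfrac18\ell_j$ per cycle; summing over all cycles and using $\sum_j \ell_j = |V(\Sigma)|$ upgrades the trivial estimate $l(\Sigma)\le\tfrac12|V(\Sigma)|$ to $l(\Sigma)\le\tfrac38|V(\Sigma)|$. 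An equivalent bookkeeping, perhaps cleaner, is to form an auxiliary graph whose vertices are the cycles of $C$ and whose edges are the inter-cycle negative edges, cut it maximally to dispose of inter-cycle chords for free, and treat the intra-cycle chords by the arc move above. The sub-maximal case $|N| < \tfrac12|V(\Sigma)|$ should be no harder: the positive subgraph then also contains paths, and switching an end-arc of a path cuts only one positive edge, so those pieces are even more favourable.

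I expect the main obstacle to be precisely this per-cycle estimate: proving that every triangle-free cubic ``cycle-with-long-chords'' configuration admits an arc-switching with net gain at least $\tfrac18\ell_j$, and verifying that the constant $\tfrac18$ survives the worst chord pattern. Girth at least $4$ is indispensable here: with triangles allowed one meets the all-negative $K_4$, for which $l = 6 - \mathrm{maxcut}(K_4) = 6-4 = 2 > \tfrac38\cdot 4$, so the bound genuinely fails. The proof must therefore exploit the absence of distance-one and distance-two chords quantitatively, and the delicate point is to turn this qualitative ``long chord'' property into the exact numerical gain that yields $\tfrac38$.
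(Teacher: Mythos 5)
Your framework (pass to a switching minimizing the number of negative edges, so that $|N| = l(\Sigma)$ and $N$ is a matching, and record optimality as $|N\cap\mathrm{cut}(U)| \le |(E\setminus N)\cap\mathrm{cut}(U)|$ for all $U$) is sound, and your first structural steps are correct. But the proof has a genuine gap, in fact two. First, the step you yourself flag as the ``main obstacle'' --- that each cycle of the positive $2$-factor admits arc-switchings with net gain at least $\tfrac18\ell_j$ --- is precisely where all the content of the theorem sits, and it is left unproven; moreover the quantified-gain strategy is logically awkward at a minimizer, where by definition no switching gains anything, so the estimate would have to be run as a contrapositive whose bookkeeping (interacting arcs, chords joining two switched arcs and staying negative, the structure changing after each improving switch) you have not controlled. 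Second, your dismissal of the sub-maximal case rests on a false structural claim: if $|N| < \tfrac12|V(\Sigma)|$, every unmatched vertex has all three incident edges positive, so the positive subgraph has vertices of degree $3$ and is \emph{not} a disjoint union of cycles and paths; the $2$-factor decomposition, the ``union of entire cycles cuts no positive edge'' step, and the per-cycle accounting all collapse there --- and that case is the generic one, not an easier afterthought.

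The fix is much simpler and lives entirely inside your own setup: use your $3$-arc switch as a structural constraint at the optimum rather than as a gain engine. Suppose $u,v,w$ are vertices with $uv,vw$ positive and each of $u,v,w$ incident to a negative edge. Girth at least $4$ excludes the edge $uw$ and chords of cycle-distance $\le 2$, so these are three \emph{distinct} negative edges, each with exactly one endpoint in $\{u,v,w\}$; switching at $\{u,v,w\}$ turns all three positive and creates at most two new negatives (the third edges at $u$ and $w$), a strict improvement --- impossible at a minimizer. Hence no such triple exists, which says exactly that the positive edges induced on the set $X$ of endpoints of the matching $N$ themselves form a matching. Each $x \in X$ then has at most two neighbours inside $X$ (its negative partner and at most one positive one), so at least one edge to $V(\Sigma)\setminus X$; counting the cut in a cubic graph gives $|X| \le 3|V(\Sigma)\setminus X|$, i.e.\ $|X| \le \tfrac34|V(\Sigma)|$, whence $l(\Sigma) = |N| = \tfrac{|X|}{2} \le \tfrac38|V(\Sigma)|$, with no case split on $|N|$, no per-cycle lemma, and no auxiliary max-cut graph. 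This is essentially the paper's argument: it applies your two local switches (single vertex, and the $3$-arc $\{u,v,w\}$) iteratively until neither is available, rather than invoking a global minimizer, but the terminal configuration and the final count are the same. Incidentally, your own optimality inequality applied to a $3$-arc already annihilates your ``extreme case'' instantly --- since all chords have length at least $3$, any three consecutive cycle vertices straddle three chords, so a perfect matching of intra-cycle chords can never occur at a minimizer --- which is a hint that the cycle-by-cycle machinery was never the right lever.
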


\begin{proof}
 We will ``reduce" $\Sigma$ by using the following two operations: 1) If the number of negative edges at a vertex $v$ is more than 1, switch at $v$. Since $\Sigma$ is cubic, this operation reduces the number of negative edges. 2) If there exist vertices $u,v,w$ such that $uv$ and $vw$ are positive edges, and each of the three vertices $u,v,w$ is incident to a negative edge, then switch at $\{u,v,w\}$. Note that the absence of triangles in $\Sigma$ ensures that the number of negative edges is decreased by operation 2. We keep doing these operations (operation 2 should be used only if operation 1 cannot be done) on $\Sigma$. Let the resulting signed graph be $\Sigma'$ (since both the operations reduce the number of negative edges, the process terminates after a finite number of steps). The fact that operation 1 cannot be done in $\Sigma'$ tells us that the negative edges form a matching. Let us denote the set of endpoints of the edges in the matching by $X$. The fact that operation 2 cannot be done in $\Sigma'$, together with the absence of loops, multiple edges, and triangles in the underlying graph of $\Sigma'$, tells us that the positive edges in $\Sigma':X$ form a matching. Since $\Sigma'$ is cubic,
 there is at least one edge from every vertex in $X$ to $V(\Sigma') - X$. Hence the number of edges between $X$ and $V(\Sigma')$ is at least $|X|$ and at most $3 |V(\Sigma') - X|$: 

\begin{equation*}
|X| \leq 3 |V(\Sigma') - X|.
\end{equation*}
This gives
\begin{equation*}
|X| \leq \frac{3}{4} |V(\Sigma')|.
\end{equation*}
Deleting exactly one of the endpoints of each of the negative edges in $\Sigma':X$ yields a balanced signed graph. Hence 
\begin{equation*}
l(\Sigma) = l(\Sigma') \leq \frac{3}{8} |V(\Sigma')|= \frac{3}{8} |V(\Sigma)|,
\end{equation*}
and this completes the proof. 
\end{proof}

The inequality in the previous theorem is tight. Consider the graph obtained from an octagon by adding diagonals between antipodal points (this graph is known in the literature as the Wagner graph). Declare the diagonal edges to be negative, and the edges on the octagon positive. It is easy to check that $l_0 = 3$ for this signed graph.

\begin{section}
{Acknowledgements}
The work presented here is part of my Ph.D. dissertation \cite{VS} written under the guidance of Neil Robertson at The Ohio State University. I would like to thank Daniel Slilaty and Thomas Zaslavsky for several helpful discussions. 
\end{section}

\end{document}